\def\eu{\mathfrak}
\def\ma{\mathbb}
\newcommand{\Gal}{\operatorname{Gal}}
\newcommand{\integer}{\genfrac[]{0.5pt}0}
\newcommand{\mayor}{\genfrac\lceil\rceil{0.5pt}0}
\newcommand{\mayorchico}{\genfrac\lceil\rceil{0.5pt}1}
\newcounter{bean}
\def\l{
\begin{list}
{\rm{(\alph{bean})}}{\usecounter{bean}
\setlength{\labelwidth}{0.8in}
\setlength{\labelsep}{0.3cm}
\setlength{\leftmargin}{1cm}}}
\numberwithin{equation}{section}
\theoremstyle{thmit}
\newtheorem{theorem}{Theorem}[section]
\newtheorem{proposition}[theorem]{Proposition}
\newtheorem{lemma}[theorem]{Lemma}
\newtheorem{corollary}[theorem]{Corollary}
\title[Artin--Schreier and Cyclotomic Extensions]
{Artin--Schreier and Cyclotomic Extensions}
\author[J.C. Salas]{Julio Cesar Salas--Torres}
\address{Academia de Matem\'aticas\\
Universidad Aut\'onoma de la Ciudad\\
de M\'exico, Plantel San Lorenzo Tezonco\\
Prolongaci\'on San Isidro No. 151\\
Col. San Lorenzo, Iztapalapa,\\
M\'exico, D.F., C.P. 09790, M\'exico}
\email{jcstorres88@hotmail.com, torres1jcesar0@gmail.com}
\author[M. Rzedowski]{Martha Rzedowski--Calder\'on}
\address{Departamento de Control Autom\'atico\\
Centro de Investigaci\'on y\\
de Estudios Avanzados del I.P.N.\\
Apartado Postal 14-740,\\
M\'exico D.F., CP 07000, M\'exico}
\email{mrzedowski@ctrl.cinvestav.mx}
\author[G. Villa]
{Gabriel Villa--Salvador}
\address{Departamento de Control Autom\'atico\\
Centro de Investigaci\'on y\\
de Estudios Avanzados del I.P.N.\\
Apartado Postal 14-740,\\
M\'exico D.F., CP 07000, M\'exico}
\email{gvillasalvador@gmail.com, gvilla@ctrl.cinvestav.mx}
\date{July 9, 2013}
\begin{document}

\begin{abstract}

From class field theory we have that any  Artin--Schreier
extension of a rational congruence function
field is contained in the composite
of cyclotomic function fields and a constant field extension.
In this paper we prove this result without using class field theory.

\end{abstract}

\subjclass[2010]{Primary 11R60; Secondary 11R18; 11R58; 12A35; 12A55.}

\keywords{Congruence function fields,
global fields, cyclotomic function fields,
Artin--Schreier extensions}

\maketitle

\section{Introduction}\label{S1}

Let $K$ be a function field over its constant field $k$ of characteristic $p > 0$.
The extension $L/K$ is a cyclic extension of degree $p$ if and only 
if there exists  $z \in L$ such that  $L = K(z),$ where 
\[
z^p - z = a
\] 
for some $a \in K$ with $a \notin {\wp}(K) := \{ b^p - b \mid b \in K\}$.
Such extension $L/K$ is called an {\em Artin--Schreier} extension.  

Let $K=k(T)$ be a field of rational functions where $k$ is a
perfect field of characteristic  $p>0$ and let $L/K$ be a cyclic extension
 of degree $p.$ Then $L=K(y),$ where $y$ satisfies
an equation of the form 
\begin{gather*}
y^p-y=s(T),\\
\intertext{with $s(T)\notin\wp(K)$ and the divisor of $s(T)$ given by}
(s(T))_K=\dfrac{\mathfrak{C}}{{\mathcal P}_1^{\alpha_1}\cdots
{\mathcal P}_r^{\alpha_r}},
\end{gather*}
 with $r\geq 0$, ${\mathcal P}_i$ a prime divisor in
$K$, $\alpha_i\in\ma{N},$ $(\alpha_i,p)=1,$ $\mathfrak{C}$ an integral
divisor relatively prime to ${\mathcal P}_i$ for $i\in\{1,...,r\}.$
The prime divisor ${\mathcal P}_\infty$ may or may not be included in this set of divisors.
This  equation is known as  the Artin--Schreier equation in its 
{\em normal form}.

The  ramified primes in
$L/K$ are precisely ${\mathcal P}_1,...,{\mathcal P}_r$ and are
totally and wildly ramified (see \cite{Has35}). The different of $L/K$ is
$$\eu{D}_{L/K}=\displaystyle\prod_{i=1}^r\mathfrak{p}_i^{(\alpha_i+1)(p-1)}$$
where $\mathfrak{p}_i$ is the divisor in $L$ that divides ${\mathcal
P}_i,$ for all $i\in\{1,...,r\}$ (see \cite[page 172]{Vil2006}).

Let $k={\ma F}_q(T)$ be
a congruence rational function field, where ${\ma F}_q$ is the finite
field of $q=p^s$ elements and $p$ is a prime number.
Let $R_T={\ma F}_q[T]$ be the ring of 
polynomials. 
 For $N\in R_T\setminus
\{0\}$, $\Lambda_N$ denotes the $N$--torsion of the Carlitz
module. Finally, $k(\Lambda_N)$
denotes the $N$--th cyclotomic
function field (see \cite {Hay74}). We have $G_N:=\Gal(k(\Lambda_N)
/k)\cong \big(R_T/(N)\big)^{\ast}$ and $\Phi(N):=|G_N|$. For an
irreducible polynomial $P\in R_T$ and $\alpha\in{\ma N}$ it
holds $\Phi(P^{\alpha})=q^{(\alpha-1)d}(q^d-1)$ where $d=\deg P$.

We know that the fact that
any Artin--Schreier extension of a rational congruence
function field is contained in the composite of
cyclotomic extensions and a constant field extension
is a consequence of the Kronecker--Weber Theorem
for function fields proved by David Hayes using the Reciprocity
Law of class field theory (\cite{Hay74}). 
The purpose 
of this paper is to give a combinatorial proof that any Artin--Schreier
extension of a rational congruence function field is contained in the composite
of cyclotomic function fields and a constant field
extension that are explicitly described.

The proof is given as follows. First, we consider Artin--Schreier
extensions with only one prime ramifying. We compute the number
of such extensions having their conductor a divisor of a given
power of the ramified prime. Next, we prove that this number
equals the number of Artin--Schreier extensions contained in
the composite of the cyclotomic function field generated
by the same power of the ramified prime
and a constant extension of degree $p$. The general case
follows immediately from this case using partial fractions
decomposition.

\section{The result}

The following proposition provides different manners that
an Artin--Schreier extension can be generated.

\begin{proposition} \label{pro15}
Let $K$ be a field of characteristic $p>0$ and let $L_1=K(y)$ and
$L_2=K(z)$ be cyclic extensions of degree $p$
over $K$
 given by $$y^p-y=a_1\in K \ \ and \ \  z^p-z=a_2\in K.$$ 
 Then the following statements are equivalent:
\l
\item $L_1=L_2.$
\item $z=jy+b$ for some $1\leq j\leq p-1$ and $b\in K.$
\item $a_2=ja_1+(b^p-b)$ for some $1\leq j\leq p-1$ and $b\in K.$
\end{list}
\end{proposition}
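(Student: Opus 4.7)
The plan is to prove the chain (a) $\Rightarrow$ (b) $\Rightarrow$ (c) $\Rightarrow$ (b) $\Rightarrow$ (a), with the Galois-theoretic step (a) $\Rightarrow$ (b) as the only substantial one; the rest are direct computations exploiting Frobenius on $\mathbb{F}_p$.

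For (b) $\Rightarrow$ (c): substitute $z = jy + b$ into $z^p - z$. Since $j \in \mathbb{F}_p$ we have $j^p = j$, so expanding $(jy+b)^p - (jy+b) = j(y^p - y) + (b^p - b) = ja_1 + (b^p - b)$, which must equal $a_2$. For (b) $\Rightarrow$ (a): if $z = jy + b$ with $b \in K$, then $z \in L_1$, so $L_2 = K(z) \subseteq L_1$; both extensions have degree $p$ over $K$, so $L_1 = L_2$.

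For (c) $\Rightarrow$ (b): set $w := jy + b \in L_1$. By the previous calculation, $w^p - w = ja_1 + (b^p - b) = a_2 = z^p - z$. Thus $w - z$ is a root of $X^p - X$ in an extension of $K$, hence $w - z \in \mathbb{F}_p$, say $w - z = c$. Then $z = jy + (b - c)$, and $b - c \in K$, proving (b).

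The only nontrivial direction is (a) $\Rightarrow$ (b). Assume $L_1 = L_2$. Choose a generator $\sigma$ of $\Gal(L_1/K) \cong \mathbb{Z}/p\mathbb{Z}$ normalized so that $\sigma(y) = y + 1$; this is possible because $\sigma(y) - y$ is a nonzero element of $\mathbb{F}_p$. Since $\sigma(z)^p - \sigma(z) = \sigma(a_2) = a_2 = z^p - z$, we get $\sigma(z) - z \in \mathbb{F}_p$, say $\sigma(z) = z + i$ with $i \in \mathbb{F}_p$. The value $i = 0$ is impossible, since it would force $z \in K$ and collapse $L_2$ to $K$; so $1 \leq i \leq p-1$. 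Now compute $\sigma(z - iy) = (z + i) - i(y + 1) = z - iy$, so $b := z - iy$ is fixed by $\sigma$ and therefore lies in $K$. Hence $z = iy + b$, giving (b) with $j = i$.

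The main obstacle, such as it is, lies in this last direction: one has to normalize the Galois generator against the \emph{specific} Artin--Schreier generator $y$ so that $\sigma(y) = y + 1$, and then use the same $\sigma$ to read off how it acts on $z$. Once this normalization is in place, everything reduces to the observation that $X^p - X$ has all its roots differing by elements of $\mathbb{F}_p$ and that $j^p = j$ for $j \in \mathbb{F}_p$.
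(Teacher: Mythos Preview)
Your proof is correct. The paper does not actually prove this proposition; it simply cites \cite[page 171]{Vil2006}, so there is no in-paper argument to compare against. Your argument is the standard one and matches what one finds in the cited reference: the additive form of Hilbert~90 in characteristic $p$, carried out explicitly by normalizing a Galois generator so that $\sigma(y)=y+1$ and then reading off the action on $z$. One small remark on your (c)~$\Rightarrow$~(b) step: when you form $w-z$ you are implicitly working inside a common overfield of $L_1$ and $L_2$ (e.g.\ an algebraic closure of $K$), since at that point of the argument you do not yet know $L_1=L_2$; this is harmless but worth making explicit.
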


\begin{proof} See \cite[page 171]{Vil2006}.
\end{proof}

We now consider Artin--Schreier extensions where
precisely one prime divisor ramifies. First we treat
the case when the divisor corresponds to a monic
irreducible polynomial $P$. 

\begin{proposition}\label{prop1}
Let $K=\ma{F}_q(T),$ $q=p^t,$ $P$ be a monic irreducible polinomial
in $\ma{F}_q[T], d=\deg P, L_1=K(y),$
where 
\begin{gather*}
y^p-y=\dfrac{f(T)}{P^{\alpha}},\\
\intertext{$f(T) \in \ma{F}_q[T],
\deg f(T) \leq d\alpha, (f(T),P)=1, \alpha\in\ma{N},
(\alpha,p)=1$, $\alpha_0:=\left[\dfrac{\alpha}{p}
\right]\hspace{-0.1cm}$, the integer part
 of $\dfrac{\alpha}{p}$, and $L_2=K(z)$, where}
z^p-z=\dfrac{g(T)}{P^{\alpha}},
\end{gather*}
$g(T) \in \ma{F}_q[T], \deg g(T) \leq d\alpha,$
$(g(T),P)=1$.
 Then the following statements are equivalent:
\l
\item $L_1=L_2.$
\item $z=jy+c,$ where $j\in\{1,...,p-1\},$ $c=\dfrac{h(T)}
{P^{\alpha_0}}$ with $h(T)\in \ma{F}_q[T],$ where either 
$h(T) = 0$ or $\deg h(T)\leq d\alpha_0.$
\item $\dfrac{g(T)}{P^{\alpha}}=j\dfrac{f(T)}{P^{\alpha}}+c^p-c,$
where $j\in\{1,...,p-1\},$ $c=\dfrac{h(T)}{P^{\alpha_0}}$ with $h(T)\in
\ma{F}_q[T],$ where either $h(T) = 0$ or $\deg h(T)\leq d\alpha_0.$ 
\end{list}
\end{proposition}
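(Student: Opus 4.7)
The plan is to dispose of the easy equivalences via direct substitution and Proposition \ref{pro15}, and to prove the nontrivial direction (a)$\Rightarrow$(b) by testing the relation $b^p-b=(g-jf)/P^\alpha$ at every place of $K=\mathbb{F}_q(T)$.

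I would start with the easy parts. The equivalence (b)$\Leftrightarrow$(c) is just substitution: plugging $z=jy+c$ into $z^p-z=g(T)/P^\alpha$ and subtracting $j(y^p-y)=jf(T)/P^\alpha$ produces (c), and conversely any $c$ satisfying (c) makes $w:=jy+c$ satisfy $w^p-w=z^p-z$, so $z-w\in\mathbb{F}_p$, and this constant can be absorbed into $c$ without changing the form $h(T)/P^{\alpha_0}$. The implication (b)$\Rightarrow$(a) is part of Proposition \ref{pro15}. Hence everything reduces to proving (a)$\Rightarrow$(b): the element $b\in K$ produced by Proposition \ref{pro15} when $L_1=L_2$ must have the shape $h(T)/P^{\alpha_0}$ with the prescribed degree bound.

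Fix such $j$ and $b$. Expanding $z=jy+b$ gives
\[
b^p-b=\frac{g(T)-jf(T)}{P^{\alpha}},
\]
and I would analyze this identity place by place. For any monic irreducible $Q\neq P$, the right-hand side is $Q$-integral, so $v_Q(b^p-b)\geq 0$; a negative $v_Q(b)$ would give $v_Q(b^p-b)=pv_Q(b)<0$, forcing $v_Q(b)\geq 0$. At the infinite place, $\deg(g-jf)\leq d\alpha=\deg P^\alpha$ yields $v_\infty((g-jf)/P^\alpha)\geq 0$, and the same argument gives $v_\infty(b)\geq 0$. At $P$ itself, $v_P((g-jf)/P^\alpha)\geq -\alpha$, and if $v_P(b)<0$ then $pv_P(b)\geq -\alpha$, so $v_P(b)\geq -\alpha/p$; since $v_P(b)$ is an integer, this sharpens to $v_P(b)\geq -\alpha_0$.

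The three local statements combine to say that $b$ is regular at every place of $K$ except possibly at $P$, with pole order at most $\alpha_0$ there, and has no pole at infinity. A partial fraction decomposition then forces $b=c_0+A(T)/P^e$ with $c_0\in\mathbb{F}_q$, $0\leq e\leq\alpha_0$ and $\deg A<ed$, and clearing the common denominator writes $b$ as $h(T)/P^{\alpha_0}$ with a quick degree count giving $\deg h\leq d\alpha_0$ (and $h=0$ in the degenerate case $b=0$). The main obstacle is the $P$-adic step, where the inequality $pv_P(b)\geq -\alpha$ has to be converted into the integer bound $v_P(b)\geq -\lfloor\alpha/p\rfloor=-\alpha_0$; the rest is valuation bookkeeping.
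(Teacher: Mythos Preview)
Your proof is correct and follows essentially the same approach as the paper: reduce to showing that the element $b\in K$ supplied by Proposition~\ref{pro15} has the stated shape by testing the identity $b^p-b=(g-jf)/P^{\alpha}$ at every place of $K$. The only minor difference is at the prime $P$: you use the inequality $v_P\big((g-jf)/P^{\alpha}\big)\geq -\alpha$ together with the integrality of $v_P(b)$ to get $v_P(b)\geq -\lfloor\alpha/p\rfloor=-\alpha_0$, whereas the paper uses the exact value $v_P(g/P^{\alpha})=-\alpha$ and the hypothesis $(\alpha,p)=1$ to force the strict inequality $p\gamma<\alpha$; both arrive at the same bound.
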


\begin{proof} The equivalences follow from Proposition \ref{pro15}. We verify the conditions to be met by $c.$

  Let
$Q$ be a monic irreducible polinomial in $\ma{F}_q[T],$ $P\neq Q.$
We have $\nu_{Q}(c)\geq 0$ since if $\nu_{Q}(c)< 0$ we would have
\begin{align*}
0&\leq\nu_{Q}\left(\dfrac{g(T)}{P^{\alpha}}\right)=\nu_{Q}\left(j\dfrac{f(T)}{P^{\alpha}}+c^p-c\right)\\
&=\min\left\{\nu_{Q}\left(\dfrac{f(T)}{P^{\alpha}}
\right),p\nu_{Q}(c),\nu_{Q}(c)\right\}=p\nu_{Q}(c)<0,
\end{align*}
which is a contradiction. Similarly, we have
$\nu_{\infty}(c)\geq 0.$ Finally, we have $c=0$ or $\nu_{P}(c)\leq 0$ since
$\nu_{P}(c)>0$ implies
$0\geq\deg (c)=\displaystyle\sum\limits_{Q\in{\mathcal
P}_K}\nu_Q(c)>0$ and this is a contradiction.

Therefore we consider the following possibilities:

\l
\item $c=0.$\vspace{2mm}

\item $c\neq 0,$ $\nu_P(c)=0$ which implies $c\in\ma{F}_q^*.$\vspace{2mm}

\item $\nu_P(c)<0,$ let $\gamma:=-\nu_P(c),$ then 
$\nu_P(c^p-c)=\min\{p\nu_P(c),\nu_P(c)\}$
$=-p\gamma\neq -\alpha,$ because $(\alpha,p)=1.$ Thus
\begin{align*}
-\alpha&=\nu_{P}\left(\dfrac{g(T)}{P^{\alpha}}\right) =
\nu_{P}\left(j\dfrac{f(T)}{P^{\alpha}}+c^p-c\right)\\
&=\min\left\{\nu_{P}\left(j\dfrac{f(T)}{P^{\alpha}}\right),\nu_{P}(c^p-c)\right\}
=\min\{-\alpha,-p\gamma\}.
\end{align*}
This implies $-\alpha< -p\gamma$, so that
 $p\gamma<\alpha$. Therefore $\gamma<\dfrac{\alpha}{p}.$ Hence
$\gamma\leq\alpha_0=\left[\dfrac{\alpha}{p}\right]\hspace{-0.1cm},$
then $c=\dfrac{h_1(T)}{P^{\gamma}}$ with $h_1(T)\in
{\ma F}_q[T]$, $(h_1(T),P)=1,$
$\deg h_1(T)\leq d\gamma$ and $\gamma\leq \alpha_0$,
which implies $c=\dfrac{h_1(T)P^{\alpha_0-\gamma}}{P^{\alpha_0}}.$
\end{list}
In the first two cases we put $h(T) = cP^{\alpha_0}$ and in the third $h(T)=h_1(T)P^{\alpha_0-\gamma}.$

We conclude that $c=\dfrac{h(T)}{P^{\alpha_0}}$ with either $h(T) = 0$ or $\deg
h(T)\leq d\alpha_0.$ 
\end{proof}

\begin{corollary}\label{corol4}
Let $K$, $q$, $P$ and $d$ be as in Proposition \ref{prop1}. Let $L=K(y)$,
be an Artin--Schreier extension given in normal form, that is
\[
y^ p-y=\dfrac{f(T)}{P^{\alpha}},
\]
where $f(T) \in \ma{F}_q[T],
\deg f(T) \leq d\alpha,$ $(f(T),P)=1,$ $\alpha\in\ma{N}$,
$(\alpha,p)=1$. Let
$\alpha_0=\integer {\alpha}{p}$. Then there are
$(p-1)q^{(d\alpha_0+1)}$ elements $z\in L$ such that $L=K(z)$
is also given in normal form
\begin{gather}\label{Eq0}
z^ p-z=\dfrac{g(T)}{P^{\alpha}}.
\end{gather}

Further, these elements $z$
are given by $z=jy+c,$ where $1\leq j\leq p-1$ and
$c=\dfrac{h(T)}{P^{\alpha_0}}$, with $h(T) \in \ma{F}_q[T]$,
$\deg h(T)\leq d\alpha_0.$ 

Moreover, there are
\[
\dfrac{p-1}{p}q^{(d\alpha_0+1)}
\]
different equations of the form (\ref {Eq0})
 with $g(T) \in \ma{F}_q[T],$
$\deg g(T)\leq d\alpha$, $(g(T),P)=1$, so that $z$ is as above and
$\dfrac{g(T)}{P^{\alpha}}=j\dfrac{f(T)}{P^{\alpha}}+c^p-c.$
\end{corollary}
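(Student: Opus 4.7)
The plan is to apply Proposition~\ref{prop1} to parameterize the admissible elements $z$, establish injectivity of this parameterization to obtain the count $(p-1)q^{d\alpha_0+1}$, and then account for the fibers over the set of distinct equations via a natural $\ma{F}_p$-translation action.

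First I would invoke Proposition~\ref{prop1}: if $L=K(z)$ is given in normal form $z^p-z=g(T)/P^{\alpha}$, then $z=jy+c$ with $j\in\{1,\dots,p-1\}$ and $c=h(T)/P^{\alpha_0}$, where $h(T)\in\ma{F}_q[T]$ is either zero or satisfies $\deg h(T)\leq d\alpha_0$. Conversely, for any such $z=jy+c$, a direct computation gives $z^p-z=jf(T)/P^{\alpha}+c^p-c$, and clearing denominators one verifies that the resulting $g(T)$ satisfies $\deg g\leq d\alpha$ and $(g,P)=1$; the coprimality follows because, modulo $P$, both $P^{\alpha-p\alpha_0}$ and $P^{\alpha-\alpha_0}$ vanish (using $(\alpha,p)=1$, so $\alpha-p\alpha_0\geq 1$), leaving $g\equiv jf\not\equiv 0\pmod P$. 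Next, the parameterization $(j,h)\mapsto z=jy+c$ is injective: if $j_1y+c_1=j_2y+c_2$, then $(j_1-j_2)y\in K$, forcing $j_1=j_2$ (since $y\notin K$) and hence $c_1=c_2$. Since the admissible $h(T)$ form an $\ma{F}_q$-vector space of dimension $d\alpha_0+1$, contributing $q^{d\alpha_0+1}$ choices, and $j$ has $p-1$ choices, there are exactly $(p-1)q^{d\alpha_0+1}$ admissible $z$.

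For the number of distinct equations, I would use the observation that two admissible $z_1,z_2$ yield the same right-hand side if and only if $(z_1-z_2)^p=z_1-z_2$, i.e.\ $z_1-z_2\in\ma{F}_p$. The translation action $z\mapsto z+i$ for $i\in\ma{F}_p$ preserves the parameter space, since $(jy+c)+i=jy+(h(T)+iP^{\alpha_0})/P^{\alpha_0}$ and $h(T)+iP^{\alpha_0}$ is either zero or of degree at most $d\alpha_0$ (because $\deg P^{\alpha_0}=d\alpha_0$). This action is free, every orbit has size exactly $p$, and the number of orbits — equivalently, the number of distinct equations — is $(p-1)q^{d\alpha_0+1}/p=\tfrac{p-1}{p}q^{d\alpha_0+1}$, which is an integer because $p\mid q$. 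The only point that requires any care is confirming that the $\ma{F}_p$-action stays inside the parameter space and is free on it; everything else is routine linear algebra over $\ma{F}_q$ combined with a direct application of Proposition~\ref{prop1}.
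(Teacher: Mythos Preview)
Your proof is correct and follows essentially the same approach as the paper: invoke Proposition~\ref{prop1} to parameterize the admissible $z$ by pairs $(j,h)$, count these pairs as $(p-1)q^{d\alpha_0+1}$, and then divide by $p$ using the fact that two $z$'s yield the same right-hand side precisely when they differ by an element of $\ma{F}_p$. Your version is more explicit than the paper's---in particular you verify directly that every $z=jy+c$ produces a $g(T)$ satisfying $\deg g\leq d\alpha$ and $(g,P)=1$, and you phrase the quotient step as a free $\ma{F}_p$-action rather than the paper's bare observation that $c^p-c=b^p-b$ iff $c-b\in\ma{F}_p$---but the underlying argument is the same.
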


\begin{proof}
By Proposition \ref{prop1} the number of possibilities
 for $j$ is $p-1$ and the number of possible $h(T)$ is
 $q^{(d\alpha_0+1)}$, thus the number of possible $z$
 is $(p-1)q^{(d\alpha_0+1)}$. To obtain the number of
 possible equations just note that $c^p-c=b^p-b$
 if and only if $(c-b)^p=c-b$ if and only if $c-b\in\ma{F}_p.$ 
\end{proof}

\begin{corollary}\label{corol5}
 Let $K$, $q$, $P$ and $d$ be as above.
 Then the number of different Artin--Schreier  extensions 
$L=K(y)$ where 
\[
y^p-y=\dfrac{f(T)}{P^{\alpha}}
\]
with $f(T) \in \ma{F}_q[T], \deg
 f(T) \leq d\alpha,$ $(f(T),P)=1,$ $\alpha\in\ma{N},$
$(\alpha,p)=1,$ is
\[
N_{\alpha}:=\dfrac{p}{p-1}\Phi\left(P^{\alpha-\alpha_0}\right),
\]
where $\alpha_0=\integer {\alpha}{p}$.
\end{corollary}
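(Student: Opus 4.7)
The plan is to count by double-counting normal-form equations. Fix $\alpha$ with $(\alpha,p)=1$ and let
\[
\mathcal{E}_\alpha := \Bigl\{\, f(T)/P^\alpha \;:\; f\in\ma{F}_q[T],\ \deg f\le d\alpha,\ (f,P)=1\,\Bigr\}.
\]
Each element of $\mathcal{E}_\alpha$ produces an Artin--Schreier extension: I first check that $f/P^\alpha \notin \wp(K)$. Indeed, if $f/P^\alpha = b^p - b$ for some $b\in K$, the case $\nu_P(b)\ge 0$ forces $\nu_P(b^p-b)\ge 0$, contradicting $\nu_P(f/P^\alpha)=-\alpha<0$, while $\nu_P(b)<0$ gives $\nu_P(b^p-b)=p\nu_P(b)$, which cannot equal $-\alpha$ since $(\alpha,p)=1$.

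Next I count $|\mathcal{E}_\alpha|$ directly. The total number of polynomials $f$ with $\deg f \le d\alpha$ (including $f=0$) is $q^{d\alpha+1}$; those divisible by $P$ have the form $P h$ with $\deg h\le d(\alpha-1)$, contributing $q^{d(\alpha-1)+1}$ (again including $0$). Hence
\[
|\mathcal{E}_\alpha| \;=\; q^{d\alpha+1} - q^{d(\alpha-1)+1} \;=\; q^{d(\alpha-1)+1}\bigl(q^d-1\bigr).
\]

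Finally, Corollary \ref{corol4} tells us that each Artin--Schreier extension $L$ of the prescribed type is represented by exactly $\tfrac{p-1}{p}\,q^{d\alpha_0+1}$ distinct elements of $\mathcal{E}_\alpha$. Dividing, and using $\Phi(P^{\alpha-\alpha_0})=q^{(\alpha-\alpha_0-1)d}(q^d-1)$, I obtain
\[
N_\alpha \;=\; \frac{q^{d(\alpha-1)+1}(q^d-1)}{\tfrac{p-1}{p}\,q^{d\alpha_0+1}} \;=\; \frac{p}{p-1}\, q^{d(\alpha-\alpha_0-1)}(q^d-1) \;=\; \frac{p}{p-1}\,\Phi\bigl(P^{\alpha-\alpha_0}\bigr).
\]
The main content is already in Corollary \ref{corol4}; the only point requiring care is the preliminary verification that the normal-form condition $(\alpha,p)=1$ alone guarantees that every such $f/P^\alpha$ escapes $\wp(K)$, so that the count $|\mathcal{E}_\alpha|$ really enumerates genuine Artin--Schreier equations rather than trivial ones.
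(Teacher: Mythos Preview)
Your proof is correct and follows essentially the same approach as the paper's: count the total number of admissible right-hand sides $f/P^\alpha$ (the paper obtains the same value $q\cdot q^{(\alpha-1)d}(q^d-1)$ via the decomposition $f=aP^\alpha+h$) and divide by the multiplicity $\tfrac{p-1}{p}q^{d\alpha_0+1}$ from Corollary~\ref{corol4}. Your explicit verification that $f/P^\alpha\notin\wp(K)$ is a welcome addition that the paper leaves implicit.
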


\begin{proof}
By the division algorithm we have
$f(T)=aP^{\alpha}+h(T),$ where $a\in\ma{F}_q,$ $h(T)\in
\ma{F}_q[T],$ with either $\deg h(T)\leq d\alpha-1$ and $(h(T),P)=1$ or $h(T) = 0$.
Then $y^p-y=a+\dfrac{h(T)}{P^{\alpha}}.$ The number of equations of this type is 
\begin{gather*}
q\Phi(P^{\alpha})=q\cdot q^{(\alpha-1)d}(q^d-1).
\intertext{By Corollary \ref{corol4} we obtain that there are}
N_{\alpha}=\dfrac{q\cdot
q^{(\alpha-1)d}(q^d-1)}{\dfrac{p-1}{p}q^{(d\alpha_0+1)}}
=\dfrac{p}{p-1}q^{(\alpha-\alpha_0-1)d}(q^d-1)=
\dfrac{p}{p-1}\Phi(P^{\alpha-\alpha_0})
\end{gather*}
different cyclic extensions $L$ of degree $p$ over $K$ where $P$ is the only prime ramified and the  power $\alpha$ appears
 in the  Artin--Schreier equation in its normal form. 
 \end{proof}
 
 \begin{lemma}\label{lextra}
 Let $K={\ma F}_q(T)$, $q=p^t$, $P$ a monic irreducible polynomial
 in ${\ma F}_q[T]$, $d=\deg P$ and $\alpha\in {\ma N}$, $(
 \alpha,p)=1$.
 \l
 \item
 Let $F=K(y)$ where
 \[
 y^p-y=\frac{f(T)}{P^{\alpha}}
 \]
 with $f(T)\in {\ma F}_q[T]$, $\deg f(T)\leq \alpha d$ and $(f(T),P)=1$.
 
 Assume $F\subseteq K(\Lambda_M)$ for some $M\in R_T
 \setminus\{0\}$. Then $F\subseteq K(\Lambda_{P^{\alpha+1}})$
 but $F\nsubseteq K(\Lambda_{P^{\alpha}})$.
 
 \item Conversely, if $F/K$ is an Artin--Schreier extension such that
$F\subseteq K(\Lambda_{P^{\alpha+1}})$ but $F\nsubseteq K(
\Lambda_{P^{\alpha}})$, then $F=K(y)$, where
\[
y^p-y=\frac{f(T)}{P^{\alpha}}
\]
where $f(T)\in {\ma F}_q[T]$, $\deg f(T)\leq \alpha d$ and $(f(T),P)=1$.
\end{list}
\end{lemma}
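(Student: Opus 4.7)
The plan is to tie the Artin--Schreier datum $\alpha$ to the upper--numbering ramification filtration of the cyclotomic field $K(\Lambda_{P^n})/K$ at the prime $P$. Two ingredients are used throughout: (i) Hayes's description of $K(\Lambda_{P^n})/K$, namely that $P$ is totally ramified with inertia group $(R_T/(P^n))^*$ and upper--numbering ramification subgroups $G^u = U_u := 1+(P^u)/(P^n)$ for $1\le u\le n$, while $\infty$ is tamely ramified with inertia of order $q-1$ and every other prime is unramified; and (ii) the consequence of the different formula $\eu{D}_{F/K}=\mathfrak{p}^{(\alpha+1)(p-1)}$ quoted at the beginning of the paper that $F=K(y)$ with $y^p-y=f(T)/P^{\alpha}$ has conductor exponent $\alpha+1$ at $P$, and that the bound $\deg f\le d\alpha$ forces $F/K$ to be unramified at $\infty$ and at every prime other than $P$.

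For part (a) I would first reduce to a prime--power conductor. Writing $M=\prod_i P_i^{n_i}$ with $P=P_{i_0}$, the Chinese Remainder isomorphism identifies $\Gal(K(\Lambda_M)/K)\cong\prod_i(R_T/(P_i^{n_i}))^*$ so that the $i$-th factor is the inertia group at $P_i$ (only $K(\Lambda_{P_i^{n_i}})/K$ contributes ramification at $P_i$). Since $F/K$ is unramified at every $P_j\ne P$, the image of that factor in $\Gal(F/K)$ is trivial, so $H:=\Gal(K(\Lambda_M)/F)$ contains every factor indexed by $j\ne i_0$; its fixed field is exactly $K(\Lambda_{P^n})$, where $P^n$ is the $P$-part of $M$. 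Hence $F\subseteq K(\Lambda_{P^n})$ and $F$ corresponds to a surjective character $\chi\colon(R_T/(P^n))^*\to\ma{Z}/p\ma{Z}$. Because the conductor exponent of $F/K$ at $P$ is $\alpha+1$, the character $\chi$ is trivial on $U_{\alpha+1}$ but not on $U_\alpha$, so it factors through $(R_T/(P^n))^*/U_{\alpha+1}\cong(R_T/(P^{\alpha+1}))^*$ but not through $(R_T/(P^{\alpha}))^*$. This gives $F\subseteq K(\Lambda_{P^{\alpha+1}})$ and $F\nsubseteq K(\Lambda_{P^{\alpha}})$.

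For part (b) I would first observe that $F\subseteq K(\Lambda_{P^{\alpha+1}})$ limits the primes of $K$ that may ramify in $F/K$ to $\{P,\infty\}$. The inertia order of $F/K$ at $\infty$ divides both $[F:K]=p$ and the inertia order $q-1$ at $\infty$ in the cyclotomic field; since $\gcd(p,q-1)=1$ this order is $1$, so $F/K$ is unramified at $\infty$. Therefore only $P$ ramifies in $F/K$, and the Artin--Schreier normal form of $F$ reads $y^p-y=f(T)/P^\beta$ with $(f,P)=1$, $(\beta,p)=1$, and $\deg f\le d\beta$. Applying part (a) to this presentation yields $F\subseteq K(\Lambda_{P^{\beta+1}})$ and $F\nsubseteq K(\Lambda_{P^{\beta}})$, which combined with the hypothesis forces $\beta=\alpha$.

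The most delicate step is the conductor/ramification identification invoked in part (a): one needs both Hayes's explicit description $G^u=U_u$ of the upper ramification groups of $K(\Lambda_{P^n})/K$ at $P$ and the functoriality of upper numbering under quotients, in order to translate the Artin--Schreier invariant $\alpha$ into the cyclotomic index $\alpha+1$ cleanly and to descend $F$ from an arbitrary cyclotomic field $K(\Lambda_M)$ to the specific one $K(\Lambda_{P^{\alpha+1}})$.
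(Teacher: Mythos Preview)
Your argument is correct and reaches the same conclusion as the paper, but the machinery you invoke is different. The paper's proof of (a) computes the discriminant $\delta_{F/K}=\mathcal{P}^{(\alpha+1)(p-1)}$ from the Artin--Schreier different, then applies the conductor--discriminant formula $\delta_{F/K}=\prod_{\varphi\in\langle\Theta\rangle}\eu{F}_\varphi=\eu{F}_\Theta^{p-1}$ to read off $\eu{F}_\Theta=\mathcal{P}^{\alpha+1}$, and finally appeals to the Dirichlet--character dictionary (cyclic subfields of cyclotomic fields $\leftrightarrow$ characters of given conductor) to place $F$ inside $K(\Lambda_{P^{\alpha+1}})$ and not in $K(\Lambda_{P^{\alpha}})$. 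You instead unpack that dictionary by hand: a CRT/inertia argument strips off the prime-to-$P$ part of $M$, and then Herbrand's theorem together with Hayes's identification $G^u=U_u$ for $K(\Lambda_{P^n})/K$ converts the Artin--Schreier ramification break into the statement that the corresponding character kills $U_{\alpha+1}$ but not $U_\alpha$. The trade-off is that the paper's route is shorter once the conductor--discriminant formula and the character correspondence are available as black boxes, whereas your route is more self-contained but requires the explicit higher-ramification filtration of the Carlitz cyclotomic tower. For part (b) your argument is essentially the paper's (write $F$ in normal form with exponent $\beta$ and apply (a) to force $\beta=\alpha$), with the useful addition that you explain \emph{why} $\infty$ is unramified in $F/K$ via $\gcd(p,q-1)=1$; the paper asserts that $P$ is the only ramified prime without spelling this out.
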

 
 \begin{proof}
(a)   We have
that $\mathcal {P}$, the prime divisor associated
to the polynomial $P$, is the only ramified prime divisor in $F/K$ and 
the different of $F/K$ is
$$\mathfrak{D}_{F/K}=\mathfrak{p}^{(\alpha+1)(p-1)},$$ where
$\mathfrak{p}$ is the only  prime divisor in $F$ that divides ${\mathcal P}$. 
Then the discriminant of $F/K$ is

$$\delta_{F/K}={\mathcal P}^{(\alpha+1)(p-1)}.$$

Since $F$ is contained in a cyclotomic field  and the
extension $F/K$ is cyclic, $F$ is the field associated to some character $\Theta$ of order $p$ and conductor ${\eu F}_{\Theta}$
(see ~\cite[Chapter 12]{Vil2006}).

By the conductor-discriminant formula (see ~\cite{RzVi2011}) we have
\[
\delta_{F/K}=\displaystyle\prod\limits_{\varphi\in\langle\Theta\rangle}
{\eu F}_{\varphi}={\eu F}_{\Theta^0}{\eu F}_{\Theta^1}\cdots {\eu F}_{\Theta^{p-1}}
={\eu F}_{\Theta}^{p-1}.
\]

 Then ${\mathcal P}^{(\alpha+1)(p-1)}={\eu F}_{\Theta}^{p-1},$ so that 
 ${\eu F}_{\Theta}={\mathcal P}^{(\alpha+1)}.$
   Hence we conclude that
  $F\subseteq K(\Lambda_{P^{\alpha+1}})$ but $F\nsubseteq K(\Lambda_{P^{\alpha}})$ (see~\cite[Chapter 12]{Vil2006}).

(b) Since $F\subseteq K(\Lambda_{P^{\alpha+1}})$, $P$ is the only
ramified prime in $F/K$. Then $F=K(y)$ where $y^p-y=\frac{f(T)}{P^{\beta}}$,
where $f(T)\in {\ma F}_q[T]$, $\deg f(T)\leq \beta d$, $(f(T),P)=1$.
It follows from (a) that $\beta=\alpha$.
 \end{proof}

\begin{proposition}\label{prop2}
Let $K=\ma{F}_q(T),$ $q=p^t$ and $P$ be a  monic
irreducible polynomial in $\ma{F}_q[T],$ $d=\:\deg P,$
$\beta\in\ma{N},$ $\beta\geq 2.$ Then the number of Artin--Schreier extensions
contained in the cyclotomic field
$K(\Lambda_{P^{\beta}})$ is
\[
\mathcal{N}_{\beta}:=\dfrac{q^{(\beta-\mayorchico{\beta}{p})d}-1}{p-1},
\]
where $\mayor{\beta}{p}$ denotes the ceiling of
$\dfrac{\beta}{p},$ namely the minimum integer greater than or equal to
$\dfrac{\beta}{p}.$
\end{proposition}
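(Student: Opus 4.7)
My plan is Galois-theoretic: translate the count of Artin--Schreier subfields of $K(\Lambda_{P^\beta})/K$ into a count of index-$p$ subgroups of $G := \Gal(K(\Lambda_{P^\beta})/K) \cong (R_T/P^\beta)^*$. Since $G$ is abelian and $|(R_T/P)^*| = q^d - 1$ is coprime to $p$, the Sylow $p$-subgroup of $G$ is precisely the one-unit group $U := 1 + P \cdot R_T/P^\beta$, and every index-$p$ subgroup of $G$ automatically contains the complementary $p'$-part. Hence $\mathcal N_\beta$ equals the number of index-$p$ subgroups of $U$, which for any abelian $p$-group is $(|U/U^p|-1)/(p-1)$.

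Next, I would identify the ring structure. Hensel's lemma applied to $X^{q^d} - X$ (equivalently, Cohen's structure theorem, valid since $\ma{F}_{q^d}$ is perfect and $R_T/P^\beta$ is equicharacteristic) gives a ring isomorphism $R_T/P^\beta \cong \ma{F}_{q^d}[X]/(X^\beta)$ with $P \leftrightarrow X$, so each $u \in U$ has a unique expansion $u = 1 + \sum_{i=1}^{\beta-1} c_i P^i$ with $c_i \in \ma{F}_{q^d}$. In characteristic $p$ the $p$-th power is additive, so
\[
u^p = 1 + \sum_{i=1}^{\beta-1} c_i^p P^{ip},
\]
and terms with $ip \geq \beta$ vanish modulo $P^\beta$. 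Since Frobenius is a bijection on $\ma{F}_{q^d}$, the coefficients $c_i^p$ range independently over $\ma{F}_{q^d}$, giving
\[
U^p = \Bigl\{ 1 + \sum_{j=1}^{\lceil \beta/p\rceil - 1} b_j P^{jp} : b_j \in \ma{F}_{q^d}\Bigr\},
\]
an $\ma{F}_q$-subspace of cardinality $q^{d(\lceil \beta/p\rceil-1)}$.

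Combining the two steps, $|U/U^p| = q^{(\beta-1)d}/q^{(\lceil \beta/p\rceil-1)d} = q^{(\beta-\lceil \beta/p\rceil)d}$, and so the number of index-$p$ subgroups of $U$ is $(q^{(\beta-\lceil \beta/p\rceil)d}-1)/(p-1) = \mathcal N_\beta$, as claimed. The main technical point is identifying the image of the $p$-th power map on $U$; once the ring isomorphism $R_T/P^\beta \cong \ma{F}_{q^d}[X]/(X^\beta)$ is in hand, Frobenius acts coefficient-wise and the dimension count is mechanical. Note that this approach does \emph{not} rely on Lemma~\ref{lextra} nor on Corollary~\ref{corol5}, so it can be compared independently with the count $\sum_{\alpha} N_\alpha$ from the Artin--Schreier side, which is the plan signaled in the introduction.
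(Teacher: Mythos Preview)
Your proof is correct and follows essentially the same route as the paper's: translate via Galois theory to counting order-$p$ (equivalently, index-$p$) subgroups of $(R_T/P^\beta)^\ast$, reduce to the one-unit group $U=1+P\,R_T/P^\beta$ using that $|(R_T/P)^\ast|=q^d-1$ is prime to $p$, and then count using the Frobenius identity $(1+x)^p=1+x^p$ in characteristic $p$. The only differences are in packaging: the paper counts elements of order $p$ directly with ${\ma F}_q[T]$-representatives and a case split $\beta\le p$ versus $\beta>p$, while you compute $|U/U^p|$ uniformly after invoking the Cohen isomorphism $R_T/P^\beta\cong {\ma F}_{q^d}[X]/(X^\beta)$; your version avoids the case distinction at the price of quoting that structure theorem.
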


\begin{proof} Since the lattice of subgroups of an abelian 
group is symmetric, we have by Galois theory
$\mathcal{N}_{\beta}$ equals the number of subgroups 
of order $p$ of the Galois group $\left(R_T/P^{\beta}\right)^*$ of the extension
$K(\Lambda_{P^{\beta}})/K.$ Let $r_p$ be the number 
of elements of order $p$ in $\left(R_T/P^{\beta}\right)^*\hspace{-0.2cm}.$
We consider the exact sequence
\[
\begin{array}{ccccccccc}
1&\longrightarrow&D_{P^{\beta},P}&\longrightarrow& \left(R_T/P^{\beta}\right)^{\ast}
&\longrightarrow& \left(R_T/P\right)^{\ast}&\longrightarrow&1.\\
&&&&A\bmod P^{\beta}&\longmapsto&A\bmod P
\end{array}
\]

Therefore
\[
\left(R_T/P^{\beta}\right)^*\cong
D_{P^{\beta},P}\times\left(R_T/P\right)^{\ast}.
\]

We have
   $D_{P^{\beta},P}=\{A\bmod P^{\beta}|A\equiv 1\bmod P\}=\{A\bmod P^{\beta}|A=1+h(T)P,$
   where $h(T)\in \ma{F}_q[T]$ and $\deg h(T)\leq(\beta-1)d-1\}.$

Observe that $|D_{P^{\beta},P}|=q^{(\beta-1)d}$.
We also note that $A\bmod P^{\beta}\equiv 1\bmod P^{\beta}$ if and only if
$h(T)=0.$ Then $A\bmod P^{\beta}$ is of order $p$ if and only if
$h(T)\neq 0$ and $(1+h(T)P)^p=1+h(T)^pP^p\equiv 1\bmod P^{\beta}$
 if and only if $P^{\beta}|h(T)^pP^p.$ We have two cases:

\l
\item If $\beta\leq p,$ it holds for all $A\bmod\left(R_T/P^{\beta}\right)^*\in D_{P^{\beta},P}$ and so $r_p=q^{(\beta-1)d}-1.$
\item If $\beta>p,$ we have $P^{\beta}g(T)=h(T)^pP^p,$
for some $g(T)\in {\ma F}_q[T]$. Therefore
 $P^{\beta-p}g(T)
 =h(T)^p.$ So $h(T)=P^{\gamma}h_1(T)$ for some $\gamma\in\ma{N}$ and
 $h_1(T)\in{\ma F}_q[T]$ with $(h_1(T),P)=1.$
We have $P^{\beta-p}g(T)=
P^{\gamma p}h_1(T)^p.$ Then $\gamma p\geq
\beta-p,$  $\gamma\displaystyle \geq\frac{\beta}{p}-1,$
$\gamma+1\displaystyle \geq\frac{\beta}{p}.$ Thus,
$\gamma+1\displaystyle \geq\mayor{\beta}{p}$.
Therefore
$h(T)=P^{(\mayorchico{\beta}{p}-1)}h_2(T),$ where
\begin{align*}
\deg h_2(T)&=\deg
h(T)-\left(\mayor{\beta}{p}-1\right)d\leq(\beta-1)d-1-
\left(\mayor{\beta}{p}-1\right)d\\
&=\left(\beta-\mayor{\beta}{p}\right)d-1.
\end{align*}
It follows that
$r_p=q^{\left(\beta-\mayorchico{\beta}{p}\right)d}-1$
is the number of elements of order $p$ of
$\left(R_T/P^{\beta}\right)^*\hspace{-0.2cm}.$

Then, in any case, the number of subgroups of order $p$ of
$\left(R_T/P^{\beta}\right)^* $ is
\begin{gather*}
\mathcal{N}_{\beta}=\dfrac{r_p}{p-1}=\frac{q^{\left(\beta-
\mayorchico{\beta}{p}\right)d}-1}{p-1}.
\end{gather*}
\end{list}
\end{proof}

\begin{corollary}\label{corol6}
Let $K=\ma{F}_q(T),$ $q=p^t$ and $P$ be a monic
irreducible  polynomial  in $\ma{F}_q[T],$ $d=\deg P,$
$\alpha\in\ma{N},$ $(\alpha,p)=1,$
$\alpha_0=\left[\dfrac{\alpha}{p}\right].$ Then the number of Artin--Schreier 
extensions contained in
$K(\Lambda_{P^{\alpha+1}})$ but not in $K(\Lambda_{P^{\alpha}})$ is
$$\dfrac{\Phi(P^{\alpha-\alpha_0})}{p-1}.$$
\end{corollary}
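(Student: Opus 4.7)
The plan is to compute the desired count as a difference, namely $\mathcal{N}_{\alpha+1}-\mathcal{N}_{\alpha}$, and invoke Proposition \ref{prop2}. Because $K(\Lambda_{P^{\alpha}})\subseteq K(\Lambda_{P^{\alpha+1}})$, every Artin--Schreier subfield of the smaller cyclotomic field already sits inside the larger one, so subtraction correctly isolates those Artin--Schreier extensions lying in $K(\Lambda_{P^{\alpha+1}})$ but not in $K(\Lambda_{P^{\alpha}})$.

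The whole calculation then reduces to evaluating the two ceilings $\lceil\alpha/p\rceil$ and $\lceil(\alpha+1)/p\rceil$ appearing in the formula $\mathcal{N}_{\beta}=(q^{(\beta-\lceil\beta/p\rceil)d}-1)/(p-1)$. Using the hypothesis $(\alpha,p)=1$, I would write $\alpha=p\alpha_{0}+r$ with $1\le r\le p-1$; since $p\nmid\alpha$, one has $\lceil\alpha/p\rceil=\alpha_{0}+1$, and since $r+1\le p$ the same value gives $\lceil(\alpha+1)/p\rceil=\alpha_{0}+1$. Consequently the exponents in the two instances of Proposition \ref{prop2} become $(\alpha-\alpha_{0})d$ and $(\alpha-\alpha_{0}-1)d$, and the subtraction telescopes:
\[
\mathcal{N}_{\alpha+1}-\mathcal{N}_{\alpha}
=\frac{q^{(\alpha-\alpha_{0})d}-q^{(\alpha-\alpha_{0}-1)d}}{p-1}
=\frac{q^{(\alpha-\alpha_{0}-1)d}(q^{d}-1)}{p-1}
=\frac{\Phi(P^{\alpha-\alpha_{0}})}{p-1},
\]
where the last equality uses the formula $\Phi(P^{m})=q^{(m-1)d}(q^{d}-1)$ recorded in Section \ref{S1}.

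The only mild caveat is the boundary case $\alpha=1$, in which $\mathcal{N}_{\alpha}=\mathcal{N}_{1}$ falls outside the stated range $\beta\ge 2$ of Proposition \ref{prop2}. This causes no difficulty, since $\Gal(K(\Lambda_{P})/K)\cong (R_{T}/P)^{*}$ has order $q^{d}-1$, coprime to $p$, so $K(\Lambda_{P})$ contains no cyclic extension of degree $p$ and $\mathcal{N}_{1}=0$, in agreement with formally extending the formula. There is no substantive obstacle here; the corollary is a routine bookkeeping with ceilings on top of Proposition \ref{prop2}.
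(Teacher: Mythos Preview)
Your proof is correct and follows essentially the same route as the paper: compute $\mathcal{N}_{\alpha+1}-\mathcal{N}_{\alpha}$ via Proposition~\ref{prop2}, use $(\alpha,p)=1$ to evaluate the ceilings as $\lceil\alpha/p\rceil=\lceil(\alpha+1)/p\rceil=\alpha_{0}+1$, and recognize the resulting expression as $\Phi(P^{\alpha-\alpha_{0}})/(p-1)$. Your explicit treatment of the boundary case $\alpha=1$ (where Proposition~\ref{prop2} does not literally apply to $\mathcal{N}_{1}$) is a nice touch that the paper's proof leaves implicit.
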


\begin{proof} By Proposition \ref{prop2} we have that number of Artin--Schreier 
extensions contained in
$K(\Lambda_{P^{\alpha+1}})$ but not in $K(\Lambda_{P^{\alpha}})$ is
 equal to 
 \begin{align*}
 \mathcal{N}_{\alpha + 1} - \mathcal{N}_{\alpha}&=\dfrac{q^{\left(\alpha+1-\mayorchico
{\alpha+1}{p}\right)d}-1}{p-1}-\dfrac{q^{\left(\alpha-\mayorchico{\alpha}{p}\right)d}-1}{p-1}\\
&=\frac{q^{\left(\alpha+1-\mayorchico{\alpha+1}{p}\right)d}-q^{\left(\alpha-
\mayorchico{\alpha}{p}\right)d}}{p-1}=
\dfrac{q^{\alpha
d}\left(q^{\left(1-\mayorchico{\alpha+1}{p}\right)d}-q^{-\mayorchico{\alpha}{p}d}\right)}{p-1}\\
&=\dfrac{q^{\alpha
d}q^{-\mayorchico{\alpha}{p}
d}(q^d-1)}{p-1}=\dfrac{q^{\left(\alpha-\mayorchico{\alpha}{p}\right)d}(q^d-1)}{p-1}=
\dfrac{q^{\left(\alpha-\left[\frac{\alpha}{p}\right]-1\right)d}(q^d-1)}{p-1}\\
&=\dfrac{\Phi\left(P^{\alpha-\alpha_0}\right)}{p-1},
\end{align*}
 because when
$p\nmid\alpha,$ we have $\mayor{\alpha+1}{p}
=\mayor{\alpha}{p}$ and
$\mayor{\alpha}{p}=\integer{\alpha}{p}+1.$
\end{proof}

\begin{corollary}\label{corol7}
Let $K=\ma{F}_q(T),$ $q=p^t$ and $P$ be a monic
irreducible  polynomial  in $\ma{F}_q[T],$ $d=\:\deg P,$
$\alpha\in\ma{N},$ $(\alpha,p)=1,$
$\alpha_0=\left[\dfrac{\alpha}{p}\right].$  Consider the Artin--Schreier 
extensions $F=K(y)$ of $K$ where
\begin{equation}\label{Eq0'}
y^p-y=\dfrac{f(T)}{P^{\alpha}}
\end{equation}
with $f(T)\in \ma{F}_q[T],$ $\deg
f(T)\leq\alpha d$ and $(f(T),P)=1$. Then there are at least
\begin{equation*}
N_{\alpha}=\dfrac{p}{p-1}\Phi(P^{\alpha-\alpha_0})
\end{equation*}
extensions $F$ of the type
described in {\rm{(\ref {Eq0'})}} contained in $K(\Lambda_{P^{\alpha + 1}})\ma{F}_{q^p}.$
\end{corollary}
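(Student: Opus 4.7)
My plan is to prove Corollary \ref{corol7} by exhibiting $N_\alpha$ distinct Artin--Schreier extensions of the form (\ref{Eq0'}) inside $L := K(\Lambda_{P^{\alpha+1}})\ma{F}_{q^p}$. The strategy is to take the $\Phi(P^{\alpha-\alpha_0})/(p-1)$ Artin--Schreier subfields of $K(\Lambda_{P^{\alpha+1}})$ of conductor $P^{\alpha+1}$ furnished by Corollary \ref{corol6} (which, by Lemma \ref{lextra}(b), already come in the required normal form), and combine each with the degree-$p$ constant extension $\ma{F}_{q^p}K$ to produce $p$ Artin--Schreier extensions of the required form in $L$. This will yield a total of $p \cdot \Phi(P^{\alpha-\alpha_0})/(p-1) = N_\alpha$ such extensions.

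Concretely, set $E := \ma{F}_{q^p}K = K(w)$, where $w^p - w = c$ for some $c \in \ma{F}_q \setminus \wp(\ma{F}_q)$. Fix any Artin--Schreier extension $F \subseteq K(\Lambda_{P^{\alpha+1}})$ of conductor $P^{\alpha+1}$; by Lemma \ref{lextra}(b), write $F = K(y)$ with $y^p - y = f(T)/P^\alpha$ in normal form. Because $F$ has constant field $\ma{F}_q$ while $E$ has constant field $\ma{F}_{q^p}$, we have $E \cap F = K$, so $\Gal(EF/K) \cong (\ma{Z}/p\ma{Z})^2$, whose $p+1$ degree-$p$ intermediate fields are $E$, $F_0 := F$, and $F_i := K(y + iw)$ for $1 \leq i \leq p-1$. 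Using $i^p = i$ in $\ma{F}_p$,
\[
(y+iw)^p - (y+iw) = \frac{f(T)}{P^\alpha} + ic = \frac{f(T)+icP^\alpha}{P^\alpha}.
\]
Since $f(T) + icP^\alpha \in \ma{F}_q[T]$ has degree at most $d\alpha$ and is coprime to $P$, each $F_i$ has the form (\ref{Eq0'}) with the same exponent $\alpha$, and $F_i \subseteq EF \subseteq L$.

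To conclude I would argue that, as $F$ varies, the packets $\{F_0,\dots,F_{p-1}\}$ are pairwise disjoint. The key observation is that $F$ can be recovered from $EF$: since $E \not\subseteq K(\Lambda_{P^{\alpha+1}})$ (they have different constant fields), the intersection $EF \cap K(\Lambda_{P^{\alpha+1}})$ is a proper degree-$p$ subfield of $EF$ that contains $F$, hence equals $F$. So $EF$ determines $F$, and if $M \neq E$ were a common degree-$p$ subfield of $EF$ and $EF'$, then $ME = EF = EF'$, forcing $F = F'$. Summing over the $\Phi(P^{\alpha-\alpha_0})/(p-1)$ choices of $F$ gives $N_\alpha$ distinct extensions of the form (\ref{Eq0'}) inside $L$, as required.

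The main delicate point is the packet-disjointness argument, together with the verification that the twist preserves the exponent $\alpha$; the latter holds because $ic \in \ma{F}_q^\ast$ contributes $icP^\alpha$ to the numerator (not a lower power of $P$), so the pole order at $P$ is unchanged and Lemma \ref{lextra} applies directly. The rest is routine Artin--Schreier computation and standard Galois theory for elementary abelian $p$-extensions.
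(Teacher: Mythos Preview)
Your proposal is correct and follows essentially the same approach as the paper: start from the $\Phi(P^{\alpha-\alpha_0})/(p-1)$ Artin--Schreier subfields of $K(\Lambda_{P^{\alpha+1}})$ supplied by Corollary~\ref{corol6} and Lemma~\ref{lextra}(b), twist each generator by $i$ times an Artin--Schreier generator of $\ma{F}_{q^p}/\ma{F}_q$ to produce $p$ fields of the required form, and then check that distinct $F$'s yield disjoint packets. The only difference is cosmetic: for the disjointness step the paper argues directly with generators (showing $z-ly$ would force $\ma{F}_{q^p}\subseteq K(\Lambda_{P^{\alpha+1}})$), whereas you recover $F$ as $EF\cap K(\Lambda_{P^{\alpha+1}})$ and conclude $EF=EF'\Rightarrow F=F'$; both arguments hinge on the same fact that the constant field of $K(\Lambda_{P^{\alpha+1}})$ is $\ma{F}_q$.
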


\begin{proof} Consider the following diagram. 
$$\xymatrix{K(\Lambda_{P^{\alpha+1}}) \ar @{-} [d]\ar @{-}[r] & K(\Lambda_{P^{\alpha+1}})\ma{F}_{q^p} \ar @{-} [d] \\
 F \ar @{-} [d]_{p} \ar @{-} [r] & F\ma{F}_{q^p} \ar @{-} [d] \\
K \ar  @{-} [r]_{p} & K\ma{F}_{q^p}}$$

We have $\ma{F}_{q^p}=\ma{F}_{q}(\xi),$ where $\xi^p-\xi=\rho$ with $\rho\in\ma{F}_{q}\backslash\wp(\ma{F}_{q}).$

By Corollary \ref{corol6} there are $\frac{\Phi(P^{\alpha-\alpha_0})}{p-1}$ Artin--Schreier
extensions $F/K$ contained in $K(\Lambda_{P^{\alpha+1}})$ but not
in $K(\Lambda_{P^{\alpha}})$. By Lemma \ref{lextra} such $F$
is of type (\ref{Eq0'}). From one such $F$ we 
obtain $p$ fields $F_i=K(y_i),$ where
$y_i=y+i\xi$ for $0\leq i\leq p-1,$ of degree $p$ over $K$ and
contained in $K(\Lambda_{P^{\alpha+1}})\ma{F}_{q^p}.$ We will verify
that these fields are of the type required, that they are different 
and also that if
$E,F\subseteq K(\Lambda_{P^{\alpha+1}})$ are of the type required and
$E\neq F,$ then $E_i\neq F_j,$ for all $i,j\in\{0,...,p-1\}.$
With the above we conclude that at least
$\dfrac{p}{p-1}\Phi(P^{\alpha-\alpha_0})$ extensions $F$ of the type required are
contained in the composite of the cyclotomic extension and the constant extension.

We have
\begin{align*}
(y+i\xi)^p-(y+i\xi)&=y^p-y+i^p\xi^p-i\xi=y^p-y+i(\xi^p-\xi)\\
&=\dfrac{f(T)}
{P^{\alpha}}+i\rho=\dfrac{f(T)+i\rho
P^{\alpha}}{P^{\alpha}}=\dfrac{g_i(T)}{P^{\alpha}}
\end{align*}
with $g_i(T)\in
\ma{F}_q[T],$ $\deg g_i(T)\leq\alpha$ for all
$i\in\{0,...,p-1\}.$

Then $F_i=K(y+i\xi)$ is an extension of the type required.
Suppose now that $0\leq i,j\leq p-1,$ $i\neq j$ and $F_i=F_j.$
Then $y+i\xi,$ $y+j\xi\in F_i=F_j,$ then $(i-j)\xi\in F_i=F_j,$
therefore $\xi\in F_i=F_j,$ so that $y\in F_i=F_j=F.$ Thus
$\xi\in F_i=F_j=F,$ which is a contradiction because the constant field of $F$ is $\ma{F}_q.$

Finally, assume that $F,E$ are extensions of the type required,
contained in $K(\Lambda_{P^{\alpha+1}})$ with $F\neq E$ and $E_i=F_j$
for some $i,j\in\{0,...,p-1\}.$

Say $F=K(y)$ with $y^p-y=\dfrac{f(T)}{P^{\alpha}},$ $f(T)\in
\ma{F}_q[T],$ $\deg f(T)\leq\alpha$ and $E=K(z)$ with
$z^p-z=\dfrac{g(T)}{P^{\alpha}},$ $g(T)\in \ma{F}_q[T],$
$\deg g(T)=\alpha.$

We have $i\neq 0$ and $j\neq 0,$ since otherwise $\xi\in
K(\Lambda_{P^{\alpha+1}}),$  which is a contradiction because the constant field of $K(\Lambda_{P^{\alpha+1}})$ is $\ma{F}_q.$
Let $l=i^{-1}j.$ We have $ly+j\xi=l(y+i\xi),$ $z+j\xi\in F_i=E_j.$
Then $z-ly\in F_i=E_j.$ But $z-ly\not\in K,$ for $F\neq E.$
Therefore $\ma{F}_{q^p}\subseteq F_i=E_j=K(z-ly)\subseteq
K(\Lambda_{P^{\alpha+1}}),$ which is a contradiction. 
\end{proof}

Next lemma is the main step to our main result. We prove that any cyclic extension of degree
$p$
over $K$ in which $\mathcal P$ is the only ramified prime divisor and it appears to the power $\alpha$ in the Artin-Schreier equation in its normal form is contained in the composite of a cyclotomic field and a constant extension.

\begin{lemma}\label{lem5}
Let $p$ be a prime number, $q=p^t$ and $P$ be a monic
irreducible polynomial  in  $\ma{F}_q[T],$ $d=\:\deg P.$ Let
$K=\ma{F}_q(T)$ and $F=K(y),$ where
$$y^p-y=\displaystyle\frac{f(T)}{P^{\alpha}}$$ with $f(T)\in
\ma{F}_q[T],$ $\deg f(T)\leq\alpha d$, $(f(T),P)=1$, $\alpha\in\ma{N},$ and
$(\alpha, p)=1.$ Then $$F\subseteq
K(\Lambda_{P^{\alpha+1}})\ma{F}_{q^p}.$$
\end{lemma}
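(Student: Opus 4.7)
The plan is a pure counting argument: the preceding corollaries have already done all the hard work, and the lemma follows by matching two numbers. Specifically, Corollary \ref{corol5} tells us the \emph{total} number of Artin--Schreier extensions $F=K(y)$ with $y^p-y=f(T)/P^\alpha$ in normal form (as in the statement of the lemma) is exactly
\[
N_{\alpha}=\dfrac{p}{p-1}\Phi\bigl(P^{\alpha-\alpha_0}\bigr),
\]
where $\alpha_0=[\alpha/p]$. On the other hand, Corollary \ref{corol7} exhibits \emph{at least} $N_{\alpha}$ such extensions inside the composite $K(\Lambda_{P^{\alpha+1}})\ma{F}_{q^p}$.

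So the first step I would take is just to invoke Corollary \ref{corol5} to get the global count. The second step is to invoke Corollary \ref{corol7} to get the number of such extensions sitting inside $K(\Lambda_{P^{\alpha+1}})\ma{F}_{q^p}$. Since the latter count cannot exceed the former (every extension in the composite of the required form is, in particular, an extension of the required form), the inequality must actually be an equality, and therefore \emph{every} Artin--Schreier extension of the required shape lies in $K(\Lambda_{P^{\alpha+1}})\ma{F}_{q^p}$. In particular the given $F$ does.

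There is essentially no obstacle at this stage: all the real content, namely the computation of $N_{\alpha}$ via the uniqueness analysis of the normal form in Proposition \ref{prop1} and Corollary \ref{corol4}, the determination of the number of Artin--Schreier subfields of $K(\Lambda_{P^{\beta}})$ via the conductor--discriminant formula and Lemma \ref{lextra}, and the explicit construction of $p$ distinct translates $F_i=K(y+i\xi)$ producing new extensions inside the constant-field lift, has been absorbed into the earlier results. The only thing to be careful about in the write-up is to note explicitly that the $N_{\alpha}$ extensions produced in Corollary \ref{corol7} are of exactly the same form (same $\alpha$, same $P$, normal form with $(f,P)=1$ and $\deg f\le \alpha d$) as those counted in Corollary \ref{corol5}, so that the comparison of cardinalities is legitimate; this is precisely why Corollary \ref{corol7} was stated with the same parameters.
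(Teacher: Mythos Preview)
Your proposal is correct and follows exactly the same counting argument as the paper: invoke Corollary~\ref{corol5} for the total number $N_{\alpha}$ of such extensions, invoke Corollary~\ref{corol7} for at least $N_{\alpha}$ of them inside $K(\Lambda_{P^{\alpha+1}})\ma{F}_{q^p}$, and conclude equality. The paper's own proof is essentially your first two paragraphs compressed into two sentences.
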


\begin{proof}  By Corollaries \ref{corol5} and \ref{corol7} we have that the number of cyclic extensions of degree $p$
over $K$ in which $P$ is the only ramified prime and it appears to the power $\alpha$ in the Artin-Schreier equation in its normal form must be equal to the number of such extensions contained in the composite of the cyclotomic and the constant extensions mentioned above. Therefore we conclude that any such extension $F$ is contained in the composite $K(\Lambda_{P^{\alpha+1}})\ma{F}_{q^p}$. 
\end{proof}

In the following result are considered Artin--Schreier extensions in which the infinite prime divisor is the only ramified prime divisor.

\begin{lemma} \label{lem6}
Let $p$ be a prime number and $q=p^t.$  Let $K=\ma{F}_q(T)$ and
$F=K(y),$ where $y^p-y=f(T)\in \ma{F}_q[T],$ $\deg f(T) =
\alpha,$ $\alpha\in\ma{N},$ and $(\alpha, p)=1.$ Then $$F\subseteq
K(\Lambda_{\frac{1}{T^{\alpha+1}}})\ma{F}_{q^p}.$$
\end{lemma}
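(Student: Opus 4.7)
The plan is to reduce Lemma~\ref{lem6} to Lemma~\ref{lem5} via the change of transcendental $T\mapsto u:=1/T$. This involution of $K=\ma{F}_q(T)$ swaps the infinite prime with the prime divisor associated to the polynomial $T$; equivalently, in the new variable $u$, the original infinite prime of $K$ is the finite prime associated to the monic irreducible $u\in\ma{F}_q[u]$.

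Writing $f(T)=a_0+a_1T+\cdots+a_\alpha T^\alpha$ with $a_\alpha\neq 0$, I would multiply numerator and denominator by $u^\alpha=T^{-\alpha}$ to put the defining Artin--Schreier equation of $F$ in the form
\[
y^p-y=\frac{g(u)}{u^\alpha},\qquad g(u):=a_\alpha+a_{\alpha-1}u+\cdots+a_0u^\alpha\in\ma{F}_q[u].
\]
Since $g(0)=a_\alpha\neq 0$ we have $(g(u),u)=1$, while $\deg g(u)\leq\alpha$ and $(\alpha,p)=1$ are immediate from the hypotheses. Therefore, viewing $K$ as $\ma{F}_q(u)$ and taking the monic irreducible $P=u$ of degree $d=1$, the extension $F/K$ satisfies all the hypotheses of Lemma~\ref{lem5}.

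Applying that lemma gives
\[
F\subseteq \ma{F}_q(u)\bigl(\Lambda_{u^{\alpha+1}}\bigr)\,\ma{F}_{q^p},
\]
where $\Lambda_{u^{\alpha+1}}$ denotes the $u^{\alpha+1}$-torsion of the Carlitz module constructed over $R_u=\ma{F}_q[u]$. By the paper's notational convention---which extends the symbol $\Lambda_N$ to the prime at infinity of $\ma{F}_q(T)$ by using $u=1/T$ as the new uniformizer---this right-hand side is precisely $K(\Lambda_{1/T^{\alpha+1}})\ma{F}_{q^p}$, because $u^{\alpha+1}=1/T^{\alpha+1}$ and the constant field $\ma{F}_q$ is preserved by the substitution.

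The only genuine subtlety is to record carefully that $\Lambda_{1/T^{\alpha+1}}$ means the Carlitz cyclotomic extension relative to the ring $\ma{F}_q[1/T]$ (equivalently, relative to the uniformizer at the original prime at infinity), so that the identification in the last display is legitimate. Once this convention is pinned down, the lemma is nothing more than Lemma~\ref{lem5} transported by the variable change, with no further computation required.
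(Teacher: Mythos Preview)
Your proposal is correct and follows exactly the paper's approach: the paper's proof is the single observation that $K=\ma{F}_q(T)=\ma{F}_q(1/T)$, so Lemma~\ref{lem5} applies after the substitution $T\mapsto 1/T$. You have simply spelled out in detail (the explicit form of $g(u)$, the verification of the hypotheses, and the meaning of $\Lambda_{1/T^{\alpha+1}}$) what the paper leaves implicit in one line.
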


\begin{proof} It follows from Lemma \ref{lem5} if we observe that
$K(T)=K\left(\dfrac{1}{T}\right)$.
\end{proof}

We note that to simplify the statement of the main result in the abstract and in the introduction, we have included the field $K(\Lambda_{\frac{1}{T^{\alpha+1}}})$ among cyclotomic fields. 

Finally, we present our main result. 

\begin{theorem}\label{prop25} Let $K=\ma{F}_q(T)$, where $q=p^t$ and $p$ is a prime number. Let $F/K$ be an Artin--Schreier extension, namely $F=K(y),$ where
$$y^p-y=s(T),$$ with $s(T)\in K,$
$s(T)\not\in\wp(K),$
$$(s(T))_K=\dfrac{\mathfrak{C}}{{\mathcal P}_1^{\alpha_1}\cdots
{\mathcal P}_r^{\alpha_r}},$$ 
where ${\mathcal P}_i$ is a prime divisor in
$K,$ $\alpha_i\in\ma{N},$ $(\alpha_i,p)=1,$ $\mathfrak{C}$ is an integral divisor relatively prime to ${\mathcal P}_i$ for $i\in\{1,...,r\}.$
\l
\item If the prime divisor $\mathcal P_{\infty}$ is not ramified in $F/K,$ that is, if $\mathcal P_{\infty}$ is not a factor of
 $\displaystyle\prod\limits_{i=1}^r{\mathcal P}_i^{\alpha_i},$ then 
$F\subseteq
K(\Lambda_{\prod\limits_{i=1}^rP_i^{\alpha_i+1}})\ma{F}_{q^p},$
\item If ${\mathcal P}_{\infty}$ is ramified in $F/K,$ that is, if ${\mathcal P}_{\infty}$ is a factor of $\displaystyle\prod\limits_{i=1}^r{\mathcal P}_i^{\alpha_i},$ say  ${\mathcal P}_{\infty} = {\mathcal P}_1$, then $F\subseteq
K(\Lambda_{\frac{1}{T^{\alpha_1+1}}})K(\Lambda_{\prod\limits_{i=2}^rP_i^{\alpha_i+1}})\ma{F}_{q^p}.$
\end{list}
\end{theorem}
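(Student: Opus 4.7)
The plan is to reduce the general case to the single-prime situations already handled by Lemmas \ref{lem5} and \ref{lem6} via a partial fraction decomposition of $s(T)$. First, I would write
\[
s(T) = g(T) + \sum_{i=1}^{r} \frac{f_i(T)}{P_i^{\alpha_i}}
\]
(in case (b), the sum runs over $i = 2, \ldots, r$, with $\mathcal{P}_1 = \mathcal{P}_\infty$ absorbed into $g(T)$), where $g(T) \in \ma{F}_q[T]$, $(f_i, P_i) = 1$, and $\deg f_i < \alpha_i \deg P_i$. The finite exponents $\alpha_i$ match those in the normal form of $s(T)$ because they are exactly the pole orders at $\mathcal{P}_i$. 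Next, I would put $g(T)$ into Artin--Schreier normal form at $\mathcal{P}_\infty$: since $\ma{F}_q$ is perfect, every monomial $c T^{pk}$ is congruent modulo $\wp(K)$ to $c_0 T^k$ with $c_0^p = c$, so iteration reduces $g(T)$ to either $0$, a constant, or a polynomial $g^{\ast}(T)$ of degree coprime to $p$. In case (a), the non-ramification of $\mathcal{P}_\infty$ forces the outcome to be a constant, producing at worst a constant extension contained in $\ma{F}_{q^p}$; in case (b), the outcome is $g^{\ast}(T)$ of degree exactly $\alpha_1$, matching the normal form exponent at infinity.

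Having assembled the pieces, I would choose $y_i$ with $y_i^p - y_i = f_i(T)/P_i^{\alpha_i}$ and $y_\infty$ with $y_\infty^p - y_\infty = g^{\ast}(T)$ (or the relevant constant). Then $y' := y_\infty + \sum_{i} y_i$ satisfies $(y')^p - y' \equiv s(T) \pmod{\wp(K)}$, so Proposition \ref{pro15} gives $F = K(y) = K(y') \subseteq K(y_\infty, y_1, \ldots, y_r)$. Lemma \ref{lem5} places each $K(y_i)$ inside $K(\Lambda_{P_i^{\alpha_i+1}}) \ma{F}_{q^p}$, and Lemma \ref{lem6} places $K(y_\infty)$ inside $K(\Lambda_{1/T^{\alpha_1+1}}) \ma{F}_{q^p}$ in case (b) (or inside $K \ma{F}_{q^p}$ in case (a)). Forming the composite and invoking the standard identity $K(\Lambda_{M_1}) K(\Lambda_{M_2}) = K(\Lambda_{M_1 M_2})$ for coprime $M_1, M_2 \in R_T$, which follows from the isomorphism $(R_T/M_1 M_2)^{\ast} \cong (R_T/M_1)^{\ast} \times (R_T/M_2)^{\ast}$, produces the containments asserted in (a) and (b).

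The main obstacle I expect is the bookkeeping around $g(T)$: I must confirm that the $\wp$-reduction leaves the finite pole orders untouched (immediate since $g(T) \in \ma{F}_q[T]$) and that the reduced degree truly equals $\alpha_1$ in case (b). The latter rests on the fact that $\nu_{\mathcal{P}_\infty}(s(T)) = -\alpha_1$ in normal form, and that subtracting an element $h^p - h \in \wp(K)$ cannot destroy a pole at $\mathcal{P}_\infty$ of order coprime to $p$, because $\nu_{\mathcal{P}_\infty}(h^p - h)$ is either non-negative or a multiple of $p$. Once this valuation check is in hand, applying Lemmas \ref{lem5} and \ref{lem6} and composing everything is routine.
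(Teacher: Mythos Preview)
Your proposal is correct and follows essentially the same route as the paper: partial fraction decomposition of $s(T)$, application of Lemma~\ref{lem5} to each finite-prime summand and Lemma~\ref{lem6} to the polynomial part, then forming the compositum and using $K(\Lambda_{M_1})K(\Lambda_{M_2})=K(\Lambda_{M_1M_2})$ for coprime $M_1,M_2$. The paper's version is marginally leaner in that it skips your $\wp$-reduction of $g(T)$ --- the normal-form hypothesis already forces $\deg g(T)\le 0$ in case~(a) and $\deg g(T)=\alpha_1$ with $(\alpha_1,p)=1$ in case~(b), so Lemmas~\ref{lem5} and~\ref{lem6} apply directly and one can simply set $y=y_1+\cdots+y_r$ without invoking Proposition~\ref{pro15} --- but your extra step is harmless.
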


\begin{proof} The divisor ${\mathcal P}_{i}$ corresponds to the polynomial $P_i$, for $i \in \{1, \ldots r\}$ in case (a) and for $i \in \{2, \ldots r\}$ in case (b). 
\l
\item By the partial fractions method we have:
$$s(T) = \frac{f(T)}{P_1^{\alpha_1}\cdots P_r^{\alpha_r}}=\frac{f_1(T)}{P_1^{\alpha_1}}+\cdots+\frac{f_r(T)}{P_r^{\alpha_r}},$$
where $f_i(T)\in \ma{F}_q[T]$ and $\deg f_i(T)\leq\alpha_i\:
\deg P_i$ for all $i\in\{1,...,r\}.$ Consider
$F_i=K(y_i),$ where
$y_i^p-y_i=\displaystyle\frac{f_i(T)}{P_i^{\alpha_i}}$ for
$i\in\{1,...,r\}.$  By Lemma \ref{lem5}, it follows that
$F_i\subseteq K(\Lambda_{P_i^{\alpha_i+1}})\ma{F}_{q^p}$ for
$i\in\{1,...,r\}.$ We note that we can put $y=y_1+\cdots+y_r$
since $y^p-y=y_1^p-y_1+\cdots+y_r^p-y_r=\dfrac{f(T)}{P_1^{\alpha_1}\cdots
P_r^{\alpha_r}}.$ Then $F=K(y)\subseteq
K(\Lambda_{P_1^{\alpha_1+1}})\cdots
K(\Lambda_{P_r^{\alpha_r+1}})\ma{F}_{q^p}=K(\Lambda_{P_1^{\alpha_1+1}\cdots P_r^{\alpha_r+1}})\ma{F}_{q^p}.$

\item In this case ${\mathcal P}_{\infty}={\mathcal P}_1$ and we have $$y^p-y=s(T)=
\dfrac{g(T)}{P_2^{\alpha_{2}}\cdots P_r^{\alpha_r}}$$
with $\deg\left(\dfrac{g(T)}{P_2^{\alpha_{2}}\cdots
P_r^{\alpha_r}}\right)=\alpha_1\in\ma{N},$ $g(T)\in \ma{F}_q[T]$ and
$P_2,...,P_r$ monic and irreducible polynomials. By the division algorithm $$g(T)=(P_2^{\alpha_{2}}\cdots
P_r^{\alpha_r})h(T)+l(T),$$ 
 where $h(T),\: l(T)\in \ma{F}_q[T]$, 
$\deg h(T)=\alpha_1$ and either
$\deg l(T)<\displaystyle\sum\limits_{i=2}^{r}\alpha_i\deg
P_i$ or $l(T) = 0$. Then
\[
y^p-y=h(T)+\dfrac{l(T)}{\prod\limits_{i=2}^rP_i^{\alpha_i}}.
\]
We consider $F_1=K(y_1),$ where $y_1^p-y_1=h(T).$ As in case
(a), by the partial fractions method we have:
\[
\frac{l(T)}{\prod\limits_{i=2}^rP_i^{\alpha_i}}
=\frac{l_2(T)}{P_2^{\alpha_2}}+\cdots+\frac{l_r(T)}{P_r^{\alpha_r}},
\]
where $l_i(T)\in \ma{F}_q[T]$ and $\deg
l_i(T)\leq\alpha_id_i$ for all $i\in\{2,...,r\}.$ From Lemmas
\ref{lem5} and \ref{lem6} we have
\begin{gather*}
F_1\subseteq K(\Lambda_{\frac{1}{T^{\alpha_1+1}}})\ma{F}_{q^p},\\
F_i\subseteq K(\Lambda_{\frac{1}{P_i^{\alpha_i+1}}})
\ma{F}_{q^p}\:\mathrm{for\: all}\: i\in\{2,...,r\}.
\end{gather*} 
We have
$y=y_1+\cdots+y_r$ since
$y^p-y=y_1^p-y_1+\cdots+y_r^p-y_r=h(T)+\dfrac{l(T)}{\prod\limits_{i=2}^rP_i^{\alpha_i}}.$

We conclude 
\begin{align*}
F=K(y)&\subseteq
K(\Lambda_{\frac{1}{T^{\alpha_1+1}}})K(\Lambda_{P_2^{\alpha_2+1}})\cdots
K(\Lambda_{P_r^{\alpha_r+1}})\ma{F}_{q^p}\\
&=K(\Lambda_{\frac{1}{T^{\alpha_1+1}}})K(\Lambda_{P_2^{\alpha_2+1}\cdots
P_r^{\alpha_r+1}})\ma{F}_{q^p}
=K(\Lambda_{\frac{1}{T^{\alpha_1+1}}})K(\Lambda_{\prod\limits_{i=2}^rP_i^{\alpha_i+1}})\ma{F}_{q^p}.
\end{align*}
\end{list}
\end{proof}

\bibliographystyle{line}
\bibliography{JAMS-paper}

\end{document}